\documentclass[12pt,a4paper]{amsart} 
\usepackage[utf8]{inputenc} 
\usepackage[top=3cm, bottom=3.5cm, left=2.5cm, right=2.5cm]{geometry}
\usepackage{amsmath,amsthm,amsfonts,amstext,amssymb}
\usepackage{mathrsfs}
\usepackage{enumerate}
\usepackage{hyperref}
\usepackage{dsfont}
\usepackage{color}
\usepackage{graphicx}
\usepackage{xcolor}
\hypersetup{
    colorlinks,
    linkcolor={blue!50!black},
    citecolor={blue!50!black},
    urlcolor={blue!80!black}
}

\setlength{\parindent}{0pt}

\def\R{{\mathbb{R}}}

\newtheorem{theorem}{Theorem}[section]

\newtheorem{lemma}[theorem]{Lemma}

\theoremstyle{definition}

\numberwithin{equation}{section}

\begin{document}

\title[On the visibility window in continuum percolation models]{On the visibility window for Brownian interlacements, Poisson cylinders and Boolean models}
\author{Yingxin Mu}
\address{
  Yingxin Mu,
  University of Leipzig, Institute of Mathematics,
  Augustusplatz 10, 04109 Leipzig, Germany.
}
\email{yingxin.mu@uni-leipzig.de}

\author{Artem Sapozhnikov}
\address{
  Artem Sapozhnikov,
  University of Leipzig, Institute of Mathematics,
  Augustusplatz 10, 04109 Leipzig, Germany.
}
\email{artem.sapozhnikov@math.uni-leipzig.de}

\begin{abstract}
We study visibility inside the vacant set of three models in $\R^d$ with slow decay of spatial correlations: Brownian interlacements, Poisson cylinders and Poisson-Boolean models. Let $Q_x$ be the radius of the largest ball centered at $x$ every point of which is visible from $0$ through the vacant set of one of these models. We prove that conditioned on $x$ being visible from $0$, $Q_x/\delta_{\|x\|}$ converges weakly, as $x\to\infty$, to the exponential distribution with an explicit intensity, which depends on the parameters of the respective model. The scaling function $\delta_r$ is the visibility window introduced in \cite{MS-visibility}, a length scale of correlations in the visible set at distance $r$ from $0$.
\end{abstract}


\maketitle

\section{Introduction}
Let $\mathcal C$ be a random closed subset of $\R^d$ with a rotationally invariant distribution. 
We think of the set $\mathcal C$ as a random field of non-transparent obstacles and say that a point $x\in\R^d$ is \emph{visible} (from $0$) if the line segment $[0,x]$ does not intersect $\mathcal C$. 
The first mathematical study of visibility goes back to Pólya \cite{Polya-visibility}. The probability of visibility to a large distance was studied in \cite{Calka-visibility} for the obstacles given by a Poisson-Boolean model and in \cite{ET-visibility} for those given by the Brownian interlacements. Sharp bounds on the probability of visibility to a large distance were obtained in \cite{MS-visibility}, when the obstacles are given by the Brownian interlacements, the Poisson cylinders or the Poisson-Boolean model. In the setting of hyperbolic spaces, visibility was studied in 
 \cite{BJST-visibility-H,TC-visibility-H, BHT-hyperbolic}.

 \smallskip

In this paper, we follow up on our study of the visibility through the vacant set of the Brownian interlacements, the Poisson cylinders and the Poisson-Boolean models in \cite{MS-visibility}.
In \cite{MS-visibility}, we introduced the \emph{visibility window} $\delta_r$, a length scale of correlations in the visible set at distance $r$ from $0$, and computed it for the three obstacle models:
\begin{itemize}
\item
(Brownian interlacements)
\[
\delta_r = \delta_\mathrm{BI}(r) = \left\{\begin{array}{ll} r^{-1} & d\geq 4\\[4pt] r^{-1}\log^2r & d=3\end{array}\right.
\]
\item
(Poisson cylinders)
\[
\delta_r = \delta_\mathrm{PC}(r) = \left\{\begin{array}{ll} r^{-1} & d\geq 3\\[4pt] 1 & d=2\end{array}\right.
\]
\item
(Poisson-Boolean models)
\[
\delta_r = \delta_\mathrm{BM}(r) = r^{-1}.
\]
\end{itemize}
Here, we investigate another aspect of the visibility window in these three models. Let $Q_x$ be the radius of the largest ball centered at $x$ every point of which is visible from $0$, 
\begin{equation}\label{def:Qx}
Q_x = \inf\big\{q>0\,:\,\text{every $y\in B(x,q)$ is visible from $0$}\big\}. 
\end{equation}
The main result of this note is that, for each of the three models, conditioned on $x$ being visible from $0$, 
$Q_x/\delta_{\|x\|}$ converges weakly, as $x\to\infty$, to the exponential distribution with intensity $\lambda>0$, which explicitly depends on the parameters of the respective model. 

\medskip

We now briefly describe the three models and refer to seminal papers \cite{Sznitman-BI,TW-cylinders} and to \cite{MR-Book} for their precise descriptions in terms of Poisson point processes. 
In fact, for the purpose of this article, we only need the characterization of their laws by the functional\footnote{For more on this topic, we refer to \cite[Chapter~2]{Matheron}.}
\[
T(K) = \mathsf P[\mathcal C\cap K\neq\emptyset],\quad\text{for compact }K\subset\R^d.
\]

\smallskip

\emph{Brownian interlacements:} For $\alpha>0$, let $\mathcal I^\alpha$ be the range of a Poisson soup of doubly infinite Brownian motions in $\R^d$ ($d\geq 3$) with intensity $\alpha$. It is a random closed subset of $\R^d$, whose law is characterized by the relations 
\begin{equation}\label{eq:BI-capacity}
\mathsf P \big[\mathcal I^\alpha\cap K = \emptyset\big] = e^{-\alpha\mathrm{cap}(K)},\quad\text{for compact }K\subset\R^d,
\end{equation}
(see \cite[Proposition~2.5]{Sznitman-BI}) where $\mathrm{cap}(K)$ is the Newtonian capacity of $K$, see \eqref{def:equilibrium-measure-and-capacity}. 
The \emph{Brownian interlacements at level $\alpha$ with radius $\rho$} is the closed $\rho$-neighborhood of $\mathcal I^\alpha$, 
\begin{equation}\label{def:BI-alpha-rho}
\mathcal I^\alpha_\rho = 
\bigcup\limits_{x\in\mathcal I^\alpha} B(x,\rho).
\end{equation}

 \emph{Poisson cylinders:} For $\alpha>0$, let $\mathcal L^\alpha$ be the range of a Poisson soup of lines in $\R^d$ ($d\geq 2$) with intensity $\alpha$. To describe the law of $\mathcal L^\alpha$, let $\nu$ be the unique Haar measure on the topological group $SO_d$ of rigid rotations in $\R^d$ with $\nu(SO_d)=1$ and denote by $\pi$ the orthogonal projection on the hyperplane $\{x=(x_1,\ldots, x_d)\in\R^d\,:\,x_1=0\}$. 
 By \cite[(2.8)]{TW-cylinders}, the law of the random closed set $\mathcal L^\alpha$ is characterized by the relations 
\begin{equation}\label{def:PC-PL}
\mathsf P \big[\mathcal L^\alpha\cap K = \emptyset\big] = e^{-\alpha\mu(K)},\quad\text{for compact }K\subset\R^d,
\end{equation}
where 
\begin{equation}\label{def:PC-mu}
\mu(K) = \int_{SO_d}\lambda_{d-1}\big(\pi(\phi(K))\big)\,\nu(d\phi).
\end{equation}
The \emph{Poisson cylinders model at level $\alpha$ with radius $\rho$} is the closed $\rho$-neighborhood of $\mathcal L^\alpha$, 
\begin{equation}\label{def:PC-alpha-rho}
\mathcal L^\alpha_\rho = 
\bigcup\limits_{x\in\mathcal L^\alpha} B(x,\rho).
\end{equation}

\emph{Poisson-Boolean models:} 
For $\alpha>0$ and a probability distribution $\mathsf Q$ on $\R_+$, 
let $\omega = \sum_{i\geq 1}\delta_{(x_i,r_i)}$ be a Poisson point process on $\R^d\times\R_+$ ($d\geq 2$) with intensity measure $\alpha dx\otimes \mathsf Q$. 
The \emph{Poisson-Boolean model with intensity $\alpha$ and radii distribution $\mathsf Q$} is the closed subset of $\R^d$, defined as 
\[
\mathcal B^\alpha_\mathsf Q = \mathcal B^\alpha_\mathsf Q(\omega) =\bigcup\limits_{i\geq 1}B(x_i,r_i).
\]
This set does not coincide with $\R^d$ if and only if 
\begin{equation}\label{eq:BM-expected-volume}
\mathsf E[\varrho^d]<\infty,
\end{equation}
where $\varrho$ is a random variable with law $\mathsf Q$ (see \cite[Proposition~3.1]{MR-Book}).
The number of balls that intersect a compact set $K$ is a Poisson random variable with parameter 
\[
\int\limits_{(x,r)\,:\,B(x,r)\cap K\neq\emptyset}\alpha dx\otimes \mathsf Q(dr) = \alpha\int\limits_{\R^d}\mathsf P\big[\varrho\geq d(x,K)\big]\,dx = \alpha\,\mathsf E\big[\lambda_d\big(B(K,\varrho)\big)\big], 
\]
where $B(K,\varrho)$ is the closed $\varrho$-neighborhood of $K$; in particular, the law of $\mathcal B^\alpha_\mathsf Q$ is characterized by the relations
\begin{equation}\label{def:BM}
\mathsf P \big[\mathcal B^\alpha_\mathsf Q\cap K = \emptyset\big] = e^{-\alpha\,\mathsf E[\lambda_d(B(K,\varrho))]},\quad\text{for compact }K\subset\R^d.
\end{equation}

\medskip

We now state our main result precisely. 

\begin{theorem}\label{thm:main-result}
Let $\alpha>0$ and $\rho>0$, let $\mathsf Q$ be a probability measure on $\R_+$. 
Let $\mathcal C$ be either the Brownian interlacements at level $\alpha$ with radius $\rho$ ($d\geq 3$) or the Poisson cylinders at level $\alpha$ with radius $\rho$ ($d\geq 2$) or the Poisson-Boolean model with intensity $\alpha$ and radius distribution $\mathsf Q$ ($d\geq 2$) satisfying \eqref{eq:BM-expected-volume}. 
There exists $\lambda>0$, which depends on the model and on the parameters of the model such that, conditioned on $x$ being visible from $0$, 
\[
\text{$\frac{Q_x}{\delta_{\|x\|}}$ converges weakly to the exponential distribution with intensity $\lambda$,}
\]
 as $x\to\infty$, where $\delta_r$ is the respective visibility window of the model. The parameter $\lambda$ is explicit for each of the models, see \eqref{eq:BI-lambda}, \eqref{eq:BM-lambda} and \eqref{eq:PC-lambda}.
\end{theorem}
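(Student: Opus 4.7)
The plan is to reduce the theorem to a single geometric asymptotic in each of the three models. Let $C(x,q):=\bigcup_{y\in B(x,q)}[0,y]$ be the solid cone from $0$ spanned by the ball $B(x,q)$. Every point of $B(x,q)$ is visible from $0$ iff $C(x,q)\cap\mathcal C=\emptyset$, and this event entails the visibility of $x$, so for every $q>0$
\[
\mathsf P[Q_x>q\mid x\text{ visible}]=\frac{\mathsf P[\mathcal C\cap C(x,q)=\emptyset]}{\mathsf P[\mathcal C\cap[0,x]=\emptyset]}=\exp\!\bigl(-\alpha\bigl[\kappa(C(x,q))-\kappa([0,x])\bigr]\bigr),
\]
where, by \eqref{eq:BI-capacity}, \eqref{def:PC-PL} and \eqref{def:BM}, $\kappa(K)$ equals $\mathrm{cap}(B(K,\rho))$ for Brownian interlacements, $\mu(B(K,\rho))$ for Poisson cylinders, and $\mathsf E[\lambda_d(B(K,\varrho))]$ for the Poisson--Boolean model. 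Weak convergence of $Q_x/\delta_{\|x\|}$ to $\mathrm{Exp}(\lambda)$ thus reduces to showing
\[
\alpha\bigl[\kappa(C(x,t\delta_{\|x\|}))-\kappa([0,x])\bigr]\ \longrightarrow\ \lambda t,\qquad\|x\|\to\infty,\ t>0.
\]

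Next I would carry out the geometric computation of $\kappa(C(x,q))-\kappa([0,x])$. At distance $s\in[0,\|x\|]$ from the origin the cone has cross-section radius $sq/\|x\|$, so $B(C(x,q),\rho)\setminus B([0,x],\rho)$ is a conical shell whose lateral width grows linearly from $0$ to $q$ along the segment. For the Poisson--Boolean model a direct Cavalieri computation yields an asymptotic of the form
\[
\kappa(C(x,q))-\kappa([0,x])\ \sim\ \tfrac{1}{2}(d-1)V_{d-1}\,\mathsf E[\varrho^{d-2}]\,q\|x\|,
\]
which combined with $\delta_r=r^{-1}$ gives the explicit $\lambda_{\mathrm{BM}}$. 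For Poisson cylinders with $d\geq 3$ the same product $q\|x\|=t$ emerges from analysing $\mu(B(C(x,q),\rho))-\mu(B([0,x],\rho))$, i.e.\ the average $(d-1)$-dimensional shadow of the conical shell over $SO_d$, yielding $\lambda_{\mathrm{PC}}$. The case $d=2$ for Poisson cylinders is qualitatively different because $\delta_r\equiv 1$: the base ball does not shrink and the cone retains a fixed apex angle $\sim 1/\|x\|$, so here one computes the limit of the excess shadow directly and obtains a nontrivial geometric constant.

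The main obstacle is the Brownian interlacements case, since Newtonian capacity is non-local. For $d\geq 4$, $\mathrm{cap}(B([0,x],\rho))$ grows linearly in $\|x\|$ to leading order, and sub-/super-additivity estimates against the equilibrium measure should extract the lateral increment in the desired form $\lambda_{\mathrm{BI}} t$ with $\delta_r=1/r$. The subtle case is $d=3$, where $\mathrm{cap}(B([0,x],\rho))\sim c\|x\|/\log\|x\|$ carries the logarithmic weight responsible for the window $\delta_r=r^{-1}\log^2 r$; the extra capacity from the cone's lateral shell is then governed by the density of the equilibrium measure of $B([0,x],\rho)$ near its lateral surface, which itself decays like $1/\log\|x\|$. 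Matching the two logarithms and identifying an explicit constant $\lambda_{\mathrm{BI}}$ will require sharpening the capacity bounds of \cite{MS-visibility} to two-sided asymptotics; this is where I expect the bulk of the technical work to lie.
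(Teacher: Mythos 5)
Your opening reduction is exactly the paper's: writing $\mathsf P[Q_x>q\mid x\text{ visible}]$ as $\exp(-\alpha[\kappa(C(x,q))-\kappa([0,x])])$ with $\kappa$ the appropriate functional ($\mathrm{cap}(B(\cdot,\rho))$, $\mu(B(\cdot,\rho))$, $\mathsf E[\lambda_d(B(\cdot,\varrho))]$), and your treatment of the Poisson--Boolean model and of the Poisson cylinders (including the harmless $d=2$ case, where the excess shadow is simply $s\delta_r=s$ up to a negligible set of directions) matches the paper's short volume/projection computations and yields the correct constants.

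The genuine gap is the Brownian interlacements case, which you yourself defer: you only state that sub-/super-additivity ``should'' extract the increment and that the capacity bounds of \cite{MS-visibility} must be sharpened, without doing either. This is precisely where the content of the paper lies, and the missing ingredients are concrete. First, one needs the exact asymptotics $\mathrm{cap}(\ell_x(\rho))=\varkappa_d\rho^{d-3}\bigl(\mathds{1}_{d=3}\tfrac{\|x\|}{\log\|x\|}+\mathds{1}_{d\geq4}\|x\|\bigr)(1+o(1))$ with an explicit $\varkappa_d$ (the paper's Lemma~\ref{l:capacity-cylinder}, proved by integrating the equilibrium identity $1=\int G(y,z)\mu_{K_r}(dz)$ along the axis); without the explicit constant you cannot identify $\lambda_{\mathrm{BI}}$, and $d\geq4$ is not easier than $d=3$ in this respect. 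Second, and more importantly, the capacity increment is not obtained by any sub-/super-additivity along the cylinder: the paper writes
\[
\mathrm{cap}\bigl(\ell_{re_1}^{s\delta_r}(\rho)\bigr)-\mathrm{cap}\bigl(\ell_{re_1}(\rho)\bigr)
=\int_{\partial\ell_{re_1}^{s\delta_r}(\rho)}\mathsf P_x\bigl[H_{\ell_{re_1}(\rho)}=\infty\bigr]\,\mu_r^s(dx),
\]
and this requires two-sided asymptotics for the escape probability of a point at distance $O(\delta_r)$ from the lateral surface (Lemma~\ref{l:bounds-non-hitting}, giving $\tfrac{\|\widetilde x\|-\rho}{\rho}\bigl(\mathds{1}_{d=3}\tfrac1{\log r}+\mathds{1}_{d\geq4}(d-3)\bigr)$ up to $1\pm\varepsilon$), plus a further nontrivial step comparing the equilibrium measure $\mu_r^s$ of the cone with the symmetric measure $\mu_r$ of the cylinder so that $\int_{\partial'}(\|\widetilde x\|-\rho)\,\mu_r^s(dx)\approx\tfrac{s\delta_r}{r}\cdot\tfrac r2\,\mathrm{cap}(\ell_{re_1}(\rho))$. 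None of this is supplied or even outlined in your proposal, so as it stands the theorem is proved only for the Boolean and cylinder models; for the interlacements you have identified the right difficulty but not an argument.
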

We prove Theorem~\ref{thm:main-result} separateley for each of the three models: for the Brownian interlacements in Section~\ref{sec:BI}, for the Poisson-Boolean model in Section~\ref{sec:BM} and for the Poisson cylinders in Section~\ref{sec:PC}.

\medskip

We finish this introduction by fixing some common notation, used throughout the proofs. 
Let $x\in\R^d$, $q>0$ and $K\subset \R^d$. We denote by $B(x,q)$ the closed Euclidean ball at $x$ with radius $q$ and write $B(q)=B(0,q)$. We denote the closed $q$-neighborhood of $K$ by $B(K,q)$, that is $B(K,q) = \bigcup_{x\in K}B(x,q)$. 
We denote by $\ell_x$ the line segment $[0,x]$ in $\R^d$ and define $\ell_x(q) = B(\ell_x,q)$. Furthermore, for $\epsilon>0$, we define 
\begin{equation}\label{def:cones}
\ell_x^\epsilon = \bigcup_{y\in B(x,\epsilon)}\ell_y
\quad\text{and}\quad
\ell_x^\epsilon(q) = B(\ell_x^\epsilon,q) = \bigcup_{y\in B(x,\epsilon)}\ell_y(q).
\end{equation}
Finally, we denote by $\lambda_n$ the $n$-dimensional Lebesgue measure and by $\kappa_n$ the volume of the unit ball in $\R^n$ (recall that $\kappa_n = \frac{\pi^{n/2}}{\Gamma(\frac{n}{2} + 1)}$).

\section{Proof of Theorem~\ref{thm:main-result} for the Brownian interlacements}\label{sec:BI}

We begin this section with some preliminaries on Brownian motion and capacity and the two key ingredients for the proof of Theorem~\ref{thm:main-result}---Lemma~\ref{l:capacity-cylinder} about the exact asymptotics for the capacity of a long cylinder and Lemma~\ref{l:bounds-non-hitting} about sharp bounds on the non-hitting probability of a long cylinder by the Brownian motion. The proof of Theorem~\ref{thm:main-result} is given in Section~\ref{sec:BI-proof}.

\subsection{Brownian motion and potential theory}

Let $W$ be a Brownian motion in $\R^d$. We denote by $\mathsf P_x$ the law of $W$ with $W_0=x$ and we write $\mathsf P_\nu$ for $\int_{\R^d}\mathsf P_x[\cdot]\nu(dx)$. For a closed set $K\subset\R^d$, let $H_K=\inf\{t\geq 0\,:\,W_t\in K\}$ be the first entrance time of $W$ in $K$. 
Classically, for any $R_1<R_2$ and $y\in\R^d$ with $R_1<\|y\|<R_2$, 
\begin{equation}\label{eq:BM-hitting}
\mathsf P_y\big[H_{\partial B(R_2)}<H_{\partial B(R_1)}\big] = 
\left\{\begin{array}{ccl}\frac{\log R_1 - \log \|y\|}{\log R_1 - \log R_2} &\quad& d=2\\[10pt] 
\frac{R_1^{2-d}-\|y\|^{2-d}}{R_1^{2-d}-R_2^{2-d}} &\quad& d\geq 3\end{array}\right.
\end{equation}
(see e.g.\ \cite[Theorem~3.18]{MP-BM-book}), in particular, when $d\geq 3$, 
\begin{equation}\label{eq:BM-escape}
\mathsf P_y\big[H_{\partial B(R_1)}=\infty\big] = 1 - \tfrac{\|y\|^{2-d}}{R_1^{2-d}}\,.
\end{equation}

\smallskip

Let $\sigma_R$ be the uniform distribution on $\partial B(R)$ and define $\mu_R=\tfrac{2\pi^{d/2}R^{d-2}}{\Gamma(d/2-1)}\sigma_R$. 
For any compact set $K$ in $\R^d$ ($d\geq 3$) and any $R$ such that $K\subset B(R)$, 
the equilibrium measure $\mu_K$ and the capacity $\mathrm{cap}(K)$ of $K$ are defined as 
(see \cite[Theorem~3.1.10]{PortStone})
\begin{equation}\label{def:equilibrium-measure-and-capacity}
\mu_K = \mathsf P_{\mu_R}\big[H_K<\infty, W_{H_K}\in\cdot\big]\quad\text{resp.}\quad
\mathrm{cap}(K) = \mu_K(K) = \mathsf P_{\mu_R}\big[H_K<\infty\big].
\end{equation}
Hence, by the strong Markov property, for any compacts $K'\subseteq K$, 
\begin{equation}\label{def:equilibrium-measure-and-capacity-2}
\mu_{K'}= \mathsf P_{\mu_K}\big[H_{K'}<\infty, W_{H_{K'}}\in\cdot\big]\quad\text{and}\quad
\mathrm{cap}(K') = \mathsf P_{\mu_K}\big[H_{K'}<\infty\big];
\end{equation}
in particular, $\mu_K(K')\leq \mathrm{cap}(K')$. 
Capacity is an invariant under isometries and monotone function on compacts, $\mathrm{cap}(\rho K) = \rho^{d-2}\mathrm{cap}(K)$ and $\mathrm{cap}(K) = \mathrm{cap}(\partial K)$ (see \cite[Proposition~3.1.11]{PortStone}). 

\smallskip

By \cite[Proposition~3.3.4]{PortStone}, 
there exist $c_i=c_i(d,\rho)$, such that for all $x\in\R^d$ with $\|x\|\geq 2$, the capacity of the cylinder $\ell_x(\rho)$ satisfies 
\begin{equation}\label{eq:capacity-cylinder-bounds}
c_1\big(\mathds{1}_{d=3}\tfrac{\|x\|}{\log \|x\|} + \mathds{1}_{d\geq 4}\|x\|\big)
\leq 
\mathrm{cap}\big(\ell_x(\rho)\big)
\leq 
c_2\big(\mathds{1}_{d=3}\tfrac{\|x\|}{\log \|x\|} + \mathds{1}_{d\geq 4}\|x\|\big).
\end{equation}

\medskip

In the next lemma, we obtain the exact asymptotics for the capacity of the cylinder $\ell_x(\rho)$. 
\begin{lemma}\label{l:capacity-cylinder}
For any $d\geq 3$, 
\[
\mathrm{cap}\big(\ell_x(\rho)\big) = \varkappa_d\rho^{d-3}\big(\mathds{1}_{d=3}\tfrac{\|x\|}{\log \|x\|} + \mathds{1}_{d\geq 4}\|x\|\big)(1+o(1)),\quad\text{as }x\to\infty,
\]
where $\varkappa_3= \pi$ and $\varkappa_d = \frac{2\pi^{\frac{d-1}2}}{\Gamma(\frac{d-3}{2})}$ for $d\geq 4$. 
\end{lemma}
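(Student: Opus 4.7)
The quantity $\varkappa_d\rho^{d-3}$ is, for $d\geq 4$, exactly the $(d-1)$-dimensional Newtonian capacity $\mathrm{cap}_{d-1}(\overline{B^{d-1}(\rho)})$ computed with the normalization \eqref{def:equilibrium-measure-and-capacity} applied in $\R^{d-1}$, so the lemma is morally saying that a long tube in $\R^d$ has capacity asymptotic to its length times the transverse capacity of its cross-section. The case $d=3$ is special because the transverse Brownian motion in $\R^2$ is recurrent, which is what produces the logarithmic correction and makes the leading constant $\pi$ independent of $\rho$. By rotational invariance of capacity, I may assume $x=\|x\|e_1$ and write $K:=\ell_x(\rho)=[0,L]\times\overline{B^{d-1}(\rho)}$ plus two hemispherical endcaps of capacity $O(\rho^{d-2})$, which are negligible compared to the leading term.

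For the \emph{lower bound}, I would apply the variational inequality $\mathrm{cap}(K)\geq \nu(K)^2/\iint G_d(y,z)\nu(dy)\nu(dz)$ to a carefully chosen test measure. For $d\geq 4$, take $\nu(dy)=\mathds{1}_{[0,L]}(y_1)\,dy_1\otimes \mu^{(d-1)}(dy_\perp)$, where $\mu^{(d-1)}$ is the $(d-1)$-dimensional equilibrium measure of $\overline{B^{d-1}(\rho)}$, of total mass $\varkappa_d\rho^{d-3}$. The key input is the semigroup identity
\[
\int_{\R} G_d\bigl((y_1,y_\perp),(z_1,z_\perp)\bigr)\,dz_1 = G_{d-1}(y_\perp,z_\perp),\qquad d\geq 4,
\]
a consequence of $p_t^{(d)}=p_t^{(1)}\otimes p_t^{(d-1)}$ and $\int p_t^{(1)}\,dz_1=1$. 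Combined with the defining relation of the $(d-1)$-dimensional equilibrium measure, this immediately yields $u(y):=\int G_d(y,z)\nu(dz)\leq 1$ on $\mathrm{supp}(\nu)$, hence $\iint G_d\,\nu\,\nu=\int u\,d\nu\leq \nu(K)=L\varkappa_d\rho^{d-3}$, and therefore $\mathrm{cap}(K)\geq L\varkappa_d\rho^{d-3}$ with the exact leading constant. For $d=3$ the axial integral of $G_3$ diverges, so I would instead take $\nu=\sigma\,\mathds{1}_K\,\lambda_3$ and compute directly, via the elementary identity $\int_{-L/2}^{L/2}(\sqrt{z_1^2+\rho^2}-|z_1|)\,dz_1\sim \rho^2\log L$, that $u(y)\sim \sigma\rho^2\log L$ uniformly on $K$ for interior $y_1$. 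Setting $\sigma=(\rho^2\log L)^{-1}$ forces $u\approx 1$ in the bulk and yields $\nu(K)\sim \pi L/\log L$.

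For the \emph{upper bound}, I would use the dual principle $\mathrm{cap}(K)\leq \nu(K)$, valid whenever $\int G_d(x,z)\nu(dz)\geq 1$ for all $x\in K$. A natural candidate is the lower-bound test measure, slightly extended along the axis to $y_1\in[-A, L+A]$ with $A=o(L)$ and inflated by a factor $1+o(1)$, which restores the Green potential to $\geq 1$ on all of $K$ despite the axial truncation of $G_d$, while changing $\nu(K)$ only by $(1+o(1))$. A parallel construction handles the $d=3$ case. The main obstacle is carrying out the $d=3$ analysis cleanly: since there is no Newtonian transverse equilibrium measure in $\R^2$, every axial integral must be truncated by hand, the endpoint regions of the tube must be analyzed directly, and the subleading corrections must be tracked sharply to confirm they are of order $o(1/\log L)$ relative to the leading term.
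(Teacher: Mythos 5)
Your proposal is correct in substance but follows a genuinely different route from the paper. The paper never constructs test measures: it starts from the equilibrium identity $1=\int_{\partial K_r}G(y,z)\,\mu_{K_r}(dz)$ valid for every $y$ on the axis, integrates it over the axial segment, swaps the integrals, and evaluates $\int_{k_r}\|y-z\|^{2-d}\,dy$ uniformly for $z$ in the bulk of $\partial K_r$; the contribution of the two end pieces is discarded using the a priori bound \eqref{eq:capacity-cylinder-bounds} on their equilibrium mass. This produces the upper and lower asymptotics simultaneously in a few lines, with the constant $\varkappa_d=(2\beta_d\gamma_d)^{-1}$ emerging from the one-dimensional integral. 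Your two-sided variational argument buys something the paper's proof does not: for $d\geq 4$ the lower bound $\mathrm{cap}(\ell_x(\rho))\geq \|x\|\,\varkappa_d\rho^{d-3}$ is exact with no error term, thanks to the heat-kernel factorization $\int_{\R}G_d\bigl((y_1,y_\perp),(z_1,z_\perp)\bigr)dz_1=G_{d-1}(y_\perp,z_\perp)$ and the identification of $\varkappa_d\rho^{d-3}$ with the transverse capacity under the normalization \eqref{def:equilibrium-measure-and-capacity}; both of these identifications are correct. The dual upper bound ($\mathrm{cap}(K)\leq\nu(\R^d)$ when the potential of $\nu$ is $\geq 1$ on $K$) is also legitimate even though your extended $\nu$ is not supported in $K$, since $\int U\nu\,d\mu_K=\int U\mu_K\,d\nu\leq\nu(\R^d)$.

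The one place where your sketch needs care is the $d=3$ upper bound, which you only assert by analogy. The analogy does go through, but the choice of the axial extension $A$ is delicate: the potential of the density $(\rho^2\log L)^{-1}$ at the tube ends is roughly $\tfrac{\log A+\log L}{2\log L}$, so an extension with $A=L^{1-\epsilon}$ (fixed $\epsilon$) leaves a deficit of order $\epsilon/2$ and the required inflation would not be $1+o(1)$; you must take $A$ with $\log A=(1-o(1))\log L$, e.g.\ $A=L/\log L$, which keeps the added mass $O(L/\log^2 L)=o(L/\log L)$ and the deficit $O(\log\log L/\log L)$, and you must check the minimum of the potential at the edge and endcap points of $K$. (Also, the subleading corrections only need to be $o(1)$ relative to the leading term, not $o(1/\log L)$ as you state.) Alternatively, the end regions can be patched by adding the equilibrium measures of short end sub-cylinders, whose mass is negligible by \eqref{eq:capacity-cylinder-bounds} --- the same a priori input the paper uses to neutralize end effects.
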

\begin{proof}
Since $\mathrm{cap}\big(\ell_x(\rho)\big) = \rho^{d-2}\mathrm{cap}\big(\ell_{\rho^{-1}\|x\|e_1}(1)\big)$, it suffices to prove the lemma for $x=re_1$ and $\rho=1$. Let $K_r=\ell_{re_1}(1)$ and $k_r = \ell_{re_1}$. Let $r\geq 2$. 

\smallskip

By \cite[Theorem~3.2.1]{PortStone}, for every $y\in k_r$, 
\[
1 = \int_{\partial K_r} G(y,z) \mu_{K_r}(dz), 
\]
where $G$ is the Green function for the standard Brownian motion, 
\[
G(y,z) = \gamma_d\|y-z\|^{2-d}, \quad\text{with }\gamma_d=\frac{\Gamma(\frac{d-2}2)}{2\pi^{d/2}}
\]
(see e.g.\ \cite[(3.1)]{PortStone}).
Integration over the segment $k_r$ gives
\[
r = \gamma_d\int_{\partial K_r} \mu_{K_r}(dz)\,\int_{k_r}\|y-z\|^{2-d}dy.
\]
Let $K_r' = \big\{z\in K_r:z_1\in\big[\tfrac{r}{\log r},r-\tfrac{r}{\log r}\big]\big\}$ and $K_r'' = K_r\setminus K_r'$. 
Note that 
\begin{itemize}\itemsep4pt
\item
there exists $C=C(d)$ such that for all $z\in\partial K_r$, 
\[
\int_{k_r}\|y-z\|^{2-d}dy \leq C\big(\mathds{1}_{d=3}\log r + \mathds{1}_{d\geq 4}\big);
\]
\item
uniformly over $z\in \partial K_r\cap K_r'$, 
\begin{eqnarray*}
\int_{k_r}\|y-z\|^{2-d}dy &= 
&\int_0^{z_1}(1+u^2)^{\frac{2-d}2}du + 
\int_0^{r-z_1}(1+u^2)^{\frac{2-d}2}du\\
&= &2(1+o(1))\big(\mathds{1}_{d=3}\log r + \mathds{1}_{d\geq 4}
\int_0^\infty(1+u^2)^{\frac{2-d}2}du\big)\\
&= &2\beta_d(1+o(1))\big(\mathds{1}_{d=3}\log r + \mathds{1}_{d\geq 4}\big),
\end{eqnarray*}
where $\beta_3=1$ and $\beta_d = \frac12\mathrm{Beta}(\tfrac12,\tfrac{d-3}{2}) = \frac{\sqrt{\pi}\Gamma(\frac{d-3}2)}{2\Gamma(\frac{d-2}2)}$ for $d\geq 4$, see \cite[6.2.1]{AS:math-functions};

\item
by \eqref{eq:capacity-cylinder-bounds}, there exists $C'=C'(d)$ such that 
\[
\mu_{K_r}(\partial K_r\cap K_r'') \leq \mathrm{cap}(K_r'')
\leq C'\big(\mathds{1}_{d=3}\tfrac{r}{\log^2r} + \mathds{1}_{d\geq 4}\tfrac{r}{\log r}\big).
\]
\end{itemize}
Hence 
\[
r = 2\beta_d\gamma_d(1+o(1))\big(\mathds{1}_{d=3}\log r + \mathds{1}_{d\geq 4}\big)\mathrm{cap}(K_r) + o(r),
\]
as $r\to\infty$, and the result follows with $\varkappa_d = (2\beta_d\gamma_d)^{-1}$. 
\end{proof}

\medskip

In the next lemma, we obtain sharp bounds for non-hitting probability of a cylinder. 
For $x=(x_1,\ldots, x_d)\in\R^d$, let $\widetilde x = (x_2,\ldots, x_d)\in\R^{d-1}$. 
\begin{lemma}\label{l:bounds-non-hitting}
Let $d\geq 3$ and $\rho>0$. For any $\varepsilon>0$, there exist $r_0=r_0(d,\rho,\varepsilon)<\infty$ and $\delta_0=\delta_0(d,\rho,\varepsilon)\in(0,1)$, such that for all $r\geq r_0$ and $x\in\R^d$ with $x_1\in\big[\frac{r}{\log^2 r},r-\frac{r}{\log^2r}\big]$ and $\rho< \|\widetilde x\|< \rho+\delta_0$, 
\[
1-\varepsilon\leq 
\frac{\mathsf P_x\big[H_{\ell_{re_1}(\rho)}=\infty\big]}
{\tfrac{\|\widetilde x\|-\rho}{\rho}\big(\mathds{1}_{d=3}\tfrac{1}{\log r} + \mathds{1}_{d\geq 4}(d-3)\big)}
\leq 
1+\varepsilon.
\]
\end{lemma}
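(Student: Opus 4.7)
The plan is to decompose $W=(W_1,\tilde W)$ into the one-dimensional longitudinal and the $(d-1)$-dimensional transverse Brownian motions, which are independent. Let $\tau_1=\inf\{t\geq 0:W_{1,t}\notin[0,r]\}$ be the exit time of $W_1$ from $[0,r]$ and $T_\rho=\inf\{t\geq 0:\|\tilde W_t\|\leq\rho\}$ be the hitting time of $B(\rho)\subset\R^{d-1}$ by $\tilde W$. Since $\ell_{re_1}(\rho)\cap\{y:y_1\in[0,r]\}=\{y:y_1\in[0,r],\,\|\tilde y\|\leq\rho\}$, the key identity $\{H_{\ell_{re_1}(\rho)}>\tau_1\}=\{T_\rho>\tau_1\}$ holds, and in particular
\[
\mathsf P_x\big[H_{\ell_{re_1}(\rho)}=\infty\big]\leq \mathsf P_x[T_\rho>\tau_1].
\]

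For the \emph{upper bound}, when $d\geq 4$ I split $\mathsf P_x[T_\rho>\tau_1]=\mathsf P_x[T_\rho=\infty]+\mathsf P_x[\tau_1<T_\rho<\infty]$. Applied to $\tilde W$, \eqref{eq:BM-escape} gives the first term as $1-(\rho/\|\tilde x\|)^{d-3}\sim(d-3)(\|\tilde x\|-\rho)/\rho$ as $\|\tilde x\|\to\rho^+$, which is the target. For the error, the strong Markov identity
\[
\mathsf P_x[T_\rho>t,T_\rho<\infty]=\mathsf E_x\big[(\rho/\|\tilde W_t\|)^{d-3}\mathds{1}_{T_\rho>t}\big]
\]
reduces, after the natural rescaling by $\delta:=\|\tilde x\|-\rho$ (which makes the sphere $\partial B(\rho)$ locally flat and the tangential motion independent $(d-2)$-dimensional), to the tail of the one-dimensional hitting time of $\{0\}$ by a BM started at $1$, and gives a bound $\lesssim \delta/\sqrt t$ uniformly in $\delta\in(0,\delta_0)$ for $\delta_0$ small. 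Combining with the Gaussian concentration $\mathsf P_x[\tau_1\leq s]\leq 2\exp(-r^2/(2s\log^4 r))$ (valid because $x_1\in[r/\log^2 r,\,r-r/\log^2 r]$), this yields $\mathsf P_x[\tau_1<T_\rho<\infty]\lesssim \delta\log^2 r/r$, which is $o(\delta(d-3)/\rho)$ as $r\to\infty$. For $d=3$, I use the classical asymptotic for the 2D motion $\tilde W$, namely $\mathsf P[T_\rho>t]\sim 2\log(\|\tilde x\|/\rho)/\log t$ as $t\to\infty$, uniformly for $\|\tilde x\|/\rho$ bounded; averaging over $\tau_1$ (which concentrates at $r^2$) gives $\mathsf P_x[T_\rho>\tau_1]\leq(1+\varepsilon)(\|\tilde x\|-\rho)/(\rho\log r)$.

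For the \emph{lower bound}, when $d\geq 4$ the inclusion $\ell_{re_1}(\rho)\subset\{y:\|\tilde y\|\leq\rho\}$ immediately yields $\mathsf P_x[H_{\ell_{re_1}(\rho)}=\infty]\geq 1-(\rho/\|\tilde x\|)^{d-3}$, matching the upper bound. For $d=3$ the infinite cylinder is hit almost surely, so I apply the strong Markov property at $\tau_1$:
\[
\mathsf P_x\big[H_{\ell_{re_1}(\rho)}=\infty\big]=\mathsf E_x\big[\mathds{1}_{T_\rho>\tau_1}\,\mathsf P_{W_{\tau_1}}[H_{\ell_{re_1}(\rho)}=\infty]\big].
\]
On $\{T_\rho>\tau_1\}$ we have $W_{1,\tau_1}\in\{0,r\}$ and $\|\tilde W_{\tau_1}\|>\rho$, so $\mathrm{dist}(W_{\tau_1},\ell_{re_1}(\rho))=\|\tilde W_{\tau_1}\|-\rho$, which is typically $\sim\sqrt{\tau_1}-\rho\sim r$. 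The Green's-function bound $\mathsf P_y[H_K<\infty]\leq \gamma_d\,\mathrm{cap}(K)/\mathrm{dist}(y,K)^{d-2}$ combined with \eqref{eq:capacity-cylinder-bounds} (which gives $\mathrm{cap}(\ell_{re_1}(\rho))\lesssim r/\log r$ for $d=3$) then yields $\mathsf P_{W_{\tau_1}}[H_{\ell_{re_1}(\rho)}<\infty]=O(1/\log r)$ on typical $W_{\tau_1}$; the rare event that $\tilde W_{\tau_1}$ lands atypically close to the axis is handled by truncation using explicit Gaussian tail bounds for $\tilde W_{\tau_1}$ given $\tau_1$. Together these give $\mathsf P_x[H_{\ell_{re_1}(\rho)}=\infty]\geq (1-O(1/\log r))\,\mathsf P_x[T_\rho>\tau_1]\geq (1-\varepsilon)(\|\tilde x\|-\rho)/(\rho\log r)$ for $r$ large.

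The principal technical difficulty is obtaining all error estimates uniformly in $\delta=\|\tilde x\|-\rho\to 0$: since both the target and the non-hitting probability vanish linearly in $\delta$, every error term must carry an explicit $\delta$ factor. For $d\geq 4$ this is achieved by the rescaling of the transverse motion by $\delta$ that flattens the cylinder boundary and reduces the $(d-1)$-dimensional hitting tail to a one-dimensional boundary-crossing computation; for $d=3$ the corresponding challenge is a uniform version of the 2D asymptotic near the circle, which can be obtained by sandwiching with hitting probabilities of annuli $\partial B(\rho),\partial B(R)$ via \eqref{eq:BM-hitting} and choosing $R$ of order $\sqrt t$.
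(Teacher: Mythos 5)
Your route is genuinely different from the paper's. You decompose in \emph{time}, via the exit time $\tau_1$ of the longitudinal coordinate from $[0,r]$, and reduce everything to time-dependent non-hitting estimates for the transverse $(d-1)$-dimensional motion $\tilde W$; the paper decomposes in \emph{space}, sandwiching $\ell_{re_1}(\rho)$ between the infinite cylinder $L(\rho)$ and coaxial cylinders $L(r\log r)$ (lower bound) resp.\ $L(r/\log^4 r)$ (upper bound), so that the main term comes directly from the annulus formula \eqref{eq:BM-hitting} applied to the projected motion and the error terms are killed by \eqref{eq:BM-escape} and a simple chaining argument with a uniform factor $(1-c)^{\frac12\log^2 r}$. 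Your identity $\{H_{\ell_{re_1}(\rho)}>\tau_1\}=\{T_\rho>\tau_1\}$ is correct, the constants come out right (in particular the factor $2$ in the classical $d=3$ asymptotic $\mathsf P[T_\rho>t]\sim 2\log(\|\tilde x\|/\rho)/\log t$ cancels against $\log(r^2)=2\log r$), and your $d\geq 4$ lower bound (inclusion $\ell_{re_1}(\rho)\subset L(\rho)$ plus the exact escape probability $1-(\rho/\|\tilde x\|)^{d-3}$) is in fact shorter and cleaner than the paper's. The trade-off is that the paper's spatial decomposition never needs any estimate on the \emph{law of hitting times}, only hitting probabilities of spheres, whereas your scheme hinges on quantitative survival bounds such as $\mathsf P[T_\rho>t]\lesssim(\|\tilde x\|-\rho)/\sqrt t$ and a version of the $d=3$ asymptotic uniform as $\|\tilde x\|\downarrow\rho$; these are true but require boundary-Harnack/Bessel-type or iteration arguments that you assert rather than prove, and they are exactly the work the paper's choice of decomposition avoids.

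Two concrete soft spots. First, in the $d=3$ lower bound, handling the event that $\tilde W_{\tau_1}$ lands atypically close to the axis ``by truncation using explicit Gaussian tail bounds for $\tilde W_{\tau_1}$ given $\tau_1$'' does not work as literally stated: the unconditional bound $\mathsf P[\|\tilde W_{\tau_1}\|\leq D\,|\,\tau_1=t]\lesssim D^2/t$ carries no factor of $\delta=\|\tilde x\|-\rho$, and since the lemma must hold for \emph{all} $\delta\in(0,\delta_0)$ (including $\delta$ far smaller than any power of $1/r$), every discarded event must itself be $O(\varepsilon\,\delta/\log r)$. You need to keep the conditioning, e.g.\ apply the Markov property for $\tilde W$ at time $\tau_1/2$ to get $\mathsf P[T_\rho>\tau_1,\ \|\tilde W_{\tau_1}\|\leq D\,|\,\tau_1=t]\lesssim \mathsf P[T_\rho>t/2]\cdot D^2/t\lesssim \tfrac{\delta}{\rho\log t}\cdot\tfrac{D^2}{t}$; with that repair the truncation goes through. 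Second, your ``typically $\mathrm{dist}(W_{\tau_1},\ell_{re_1}(\rho))\sim r$, hence $\mathsf P_{W_{\tau_1}}[H_{\ell_{re_1}(\rho)}<\infty]=O(1/\log r)$'' is too optimistic near the ends of the admissible range of $x_1$: for $x_1\approx r/\log^2 r$ one typically has $\tau_1\asymp r^2/\log^4 r$ and $\|\tilde W_{\tau_1}\|\asymp r/\log^2 r$, for which the Green's-function bound with $\mathrm{cap}(\ell_{re_1}(\rho))\asymp r/\log r$ gives only $O(\log\log r/\log r)$ (and the naive ratio $\mathrm{cap}/\mathrm{dist}$ with $\mathrm{dist}\asymp r/\log^2 r$ is even vacuous). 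This is harmless for your purpose --- any $o(1)$ multiplicative loss suffices for the lower bound --- but the stated rate should be weakened accordingly. With these repairs, and with honest proofs of the asserted uniform-in-$\delta$ survival estimates (your proposed sandwich with annuli via \eqref{eq:BM-hitting} at radius $R\asymp\sqrt t$ must again be set up so the ``stay inside $B(R)$ without hitting $B(\rho)$'' correction retains a factor $\delta$), your argument would yield the lemma.
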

\begin{proof}
Let $\varepsilon>0$. Let $r$ be large enough and choose $x$ as in the statement of the lemma. 
For $s>0$, let $L(s)=B(\R e_1,s)$ be the doubly infinite cylinder with radius $s$ and axis vector $e_1$. 

\smallskip

We begin with the lower bound. By the strong Markov property, 
\[
\mathsf P_x\big[H_{\ell_{re_1}(\rho)}=\infty\big]  \geq 
\mathsf P_x\big[H_{\partial L(r\log r)}<H_{L(\rho)}\big]\,
\inf\limits_{y\in \partial L(r\log r)}\mathsf P_y\big[H_{\ell_{re_1}(\rho)}=\infty\big].
\]
Note that the orthogonal projection of the Brownian motion $W$ onto the hyperplane $\{x\in\R^d:x_1=0\}$ is a standard $(d-1)$-dimensional Brownian motion and the projection of the cylinder $L(s)$ on the hyperplane is the $(d-1)$-dimensional Euclidean ball of radius $s$. Thus, by \eqref{eq:BM-hitting}, if $r$ is large enough and $\|\widetilde x\|-\rho$ small enough, then 
\[
\mathsf P_x\big[H_{\partial L(r\log r)}<H_{L(\rho)}\big]\geq 
(1-\tfrac12\varepsilon)\tfrac{\|\widetilde x\|-\rho}{\rho}\big(\mathds{1}_{d=3}\tfrac{1}{\log r} + \mathds{1}_{d\geq 4}(d-3)\big).
\]
Furthermore, by \eqref{eq:BM-escape}, if $r$ is large enough, then 
\[
\inf\limits_{y\in \partial L(r\log r)}\mathsf P_y\big[H_{\ell_{re_1}(\rho)}=\infty\big]
\geq 
\inf\limits_{y\in \partial L(r\log r)}\mathsf P_y\big[H_{B(r+\rho)}=\infty\big]
\geq 1-\tfrac12\varepsilon.
\]
Putting the two bounds together gives the desired lower bound. 

\smallskip

We proceed with the upper bound. Let $s_r=\frac{r}{\log^4 r}$. We have 
\[
\mathsf P_x\big[H_{\ell_{re_1}(\rho)}=\infty\big]  \leq 
\mathsf P_x\big[H_{\partial L(s_r)}<H_{L(\rho)}\big] 
+ 
\mathsf P_x\big[H_{\ell_{re_1}(\rho)}=\infty, H_{\partial L(s_r)}>H_{L(\rho)}\big].
\]
As in the proof of the lower bound, by \eqref{eq:BM-hitting}, if $r$ is large enough and $\|\widetilde x\|-\rho$ small enough, then 
\[
\mathsf P_x\big[H_{L(s_r)}<H_{L(\rho)}\big]\leq 
(1+\tfrac12\varepsilon)\tfrac{\|\widetilde x\|-\rho}{\rho}\big(\mathds{1}_{d=3}\tfrac{1}{\log r} + \mathds{1}_{d\geq 4}(d-3)\big).
\]
To bound the second probability, note that by the rotational symmetry of the Brownian motion and \eqref{eq:BM-escape}, for any $y\in L(s_r)$, the probability that the Brownian motion started at $y$ will exit from the ball $B(y_1e_1, 2s_r)$ before hitting $B(y_1e_1,\rho)$ or $\partial L(s_r)$ is bounded from above by $\min\big(C\frac{(\|\widetilde y\|-\rho)_+}{\rho}, 1-c\big)$ for some $C=C(d)<\infty$ and $c=c(d)\in(0,1)$. If the event in the second probability occurs, since $x_1\in [s_r\log^2r, r-s_r\log^2r]$, the Brownian motion started at $x$ will exit the ball $B(x_1e_1,s_r\log^2 r)$ before hitting $B(x_1e_1,\rho)$ or $\partial L(s_r)$. Thus, by the strong Markov property at the exit times from the balls $B(x_1e_1,2s_rk)$, $1\leq k\leq\frac12\log^2r-1$,  
\begin{eqnarray*}
\mathsf P_x\big[H_{\ell_{re_1}(\rho)}=\infty, H_{\partial L(s_r)}>H_{L(\rho)}\big]
&\leq &C'\tfrac{\|\widetilde x\|-\rho}{\rho}(1-c)^{\frac12\log^2r}\\
&<&\tfrac12\varepsilon\tfrac{\|\widetilde x\|-\rho}{\rho}\big(\mathds{1}_{d=3}\tfrac{1}{\log r} + \mathds{1}_{d\geq 4}(d-3)\big),
\end{eqnarray*}
for some $C'=C'(d)<\infty$ and $c\in(0,1)$ as above. 
Putting the bounds together gives the desired upper bound. 
\end{proof}

\subsection{Proof of Theorem~\ref{thm:main-result}}\label{sec:BI-proof}

Fix positive $\alpha$ and $\rho$ and denote by $\mathsf P_{\alpha,\rho}$ the law of the Brownian interlacements at level $\alpha$ with radius $\rho$. 
Recall that $\delta_r = \frac{\log^2r}{r}$ for $d=3$ and $\delta_r = \frac{1}{r}$ for $d\geq 4$. We aim to prove that for every $s>0$, 
\[
\lim\limits_{r\to\infty}\mathsf P_{\alpha,\rho}\big[Q_{re_1}>s\delta_r\,|\,re_1\text{ is visible from }0\big] = \exp\big(-\lambda_{\mathrm{BI}}s\big)
\]
with
\begin{equation}\label{eq:BI-lambda}
\lambda_{\mathrm{BI}} = \frac12\alpha\varkappa_d\rho^{d-4}\big(\mathds{1}_{d=3} + \mathds{1}_{d\geq 4}(d-3)\big)
= \left\{\begin{array}{ll}\frac{\alpha\pi}{2\rho} & d=3\\[4pt]
\frac{\alpha\rho^{d-4}\pi^{\frac{d-1}{2}}}{\Gamma(\frac{d-3}{2})} & d\geq 4\end{array}\right.
\end{equation}
where $\varkappa_d$ is the constant from Lemma~\ref{l:capacity-cylinder}.

\medskip

By the definition \eqref{def:Qx} of $Q_{re_1}$, \eqref{eq:BI-capacity}, \eqref{def:BI-alpha-rho} and \eqref{def:cones}, 
\begin{multline*}
\mathsf P_{\alpha,\rho}\big[Q_{re_1}>s\delta_r\,|\,re_1\text{ is visible from }0\big]
= 
\frac{\mathsf P_{\alpha,\rho}[\text{every $x\in B(re_1,s\delta_r)$ is visible from $0$}]}{\mathsf P_{\alpha,\rho}[\text{$re_1$ is visible from $0$}]}\\
=
\frac{\mathsf P [\mathcal I^\alpha_\rho\cap \ell_{re_1}^{s\delta_r} = \emptyset]}{\mathsf P[\mathcal I^\alpha_\rho\cap \ell_{re_1} = \emptyset]}
=
\frac{\mathsf P [\mathcal I^\alpha\cap \ell_{re_1}^{s\delta_r}(\rho) = \emptyset]}{\mathsf P[\mathcal I^\alpha\cap \ell_{re_1}(\rho) = \emptyset]}
=
\exp\big(-\alpha\big[\mathrm{cap}\big(\ell_{re_1}^{s\delta_r}(\rho)\big) - \mathrm{cap}\big(\ell_{re_1}(\rho)\big)\big]\big).
\end{multline*}
Thus, it suffices to prove that 
\[
\lim\limits_{r\to\infty}\big[\mathrm{cap}\big(\ell_{re_1}^{s\delta_r}(\rho)\big) - \mathrm{cap}\big(\ell_{re_1}(\rho)\big)\big] = \frac{\lambda_{\mathrm{BI}}}{\alpha}s.
\]
We write $\mu_r$ resp.\ $\mu_r^s$ for the equilibrium measure of $\ell_{re_1}(\rho)$ resp.\ $\ell_{re_1}^{s\delta_r}(\rho)$. 
By \eqref{def:equilibrium-measure-and-capacity-2}, since $\ell_{re_1}(\rho)\subset\ell_{re_1}^{s\delta_r}(\rho)$, 
\[
\mathrm{cap}\big(\ell_{re_1}^{s\delta_r}(\rho)\big) - \mathrm{cap}\big(\ell_{re_1}(\rho)\big)
= \int_{\partial \ell_{re_1}^{s\delta_r}(\rho)}\mathsf P_x[H_{\ell_{re_1}(\rho)}=\infty]\,\mu_r^s(dx).
\]
Let 
\[
\partial' = \big\{x\in\partial \ell_{re_1}^{s\delta_r}(\rho)\,:\, \tfrac{r}{\log^2r}\leq x_1\leq r - \tfrac{r}{\log^2r}\big\}\quad \text{and}\quad \partial'' = \partial \ell_{re_1}^{s\delta_r}(\rho)\setminus \partial'.
\]

Since every $x\in \partial \ell_{re_1}^{s\delta_r}(\rho)$ is at distance at most $s\delta_r$ from a ball $B_x$ of radius $\rho$ contained in $\ell_{re_1}(\rho)$, by \eqref{eq:BM-escape}, 
\[
\int_{\partial''}\mathsf P_x[H_{\ell_{re_1}(\rho)}=\infty]\,\mu_r^s(dx)
\leq 
\int_{\partial''}\mathsf P_x[H_{B_x}=\infty]\,\mu_r^s(dx)
\leq
Cs\delta_r\,\mu_r^s(\partial'') \leq Cs\delta_r\,\mathrm{cap}(\partial ''),
\]
for some $C=C(d,\rho)$. By \eqref{eq:capacity-cylinder-bounds}, $\delta_r\,\mathrm{cap}(\partial '')\to 0$ as $r\to\infty$. 
Thus, it suffices to prove that 
\[
\lim\limits_{r\to\infty} \int_{\partial'}\mathsf P_x[H_{\ell_{re_1}(\rho)}=\infty]\,\mu_r^s(dx)
= \frac{\lambda_{\mathrm{BI}}}{\alpha}s.
\]
By Lemma~\ref{l:bounds-non-hitting}, since for every $x\in\partial'$, $\|\widetilde x\|\leq \rho + s\delta_r$ for all large $r$, 
\begin{eqnarray*}
\lim\limits_{r\to\infty} \int_{\partial'}\mathsf P_x[H_{\ell_{re_1}(\rho)}=\infty]\,\mu_r^s(dx)
&=
&\lim\limits_{r\to\infty}\frac{1}{\rho}\big(\mathds{1}_{d=3}\tfrac{1}{\log r} + \mathds{1}_{d\geq 4}(d-3)\big)
\int_{\partial'}(\|\widetilde x\|-\rho)\,\mu_r^s(dx)\\
&=
&\lim\limits_{r\to\infty}\frac{1}{\rho}\big(\mathds{1}_{d=3}\tfrac{1}{\log r} + \mathds{1}_{d\geq 4}(d-3)\big)
\int_{\partial'}\big(\frac{s\delta_r}{r}x_1\big)\,\mu_r^s(dx).
\end{eqnarray*}
Now,
\[
\int_{\partial'}x_1\,\mu_r^s(dx) = 
\int_{\partial'}\big(\int_0^{x_1}dy\big)\,\mu_r^s(dx) 
= \int_0^r\mu_r^s\big(x\in\partial' : x_1>y\big)dy.
\]
Thus, 
\[
\frac{\delta_r}{r}\Big|\int_{\partial'}x_1\,\mu_r^s(dx) - \int_0^r\mu_r^s\big(x\in\ell_{re_1}^{s\delta_r}(\rho) : x_1>y\big)dy\Big|
\leq 
\frac{\delta_r}{r}\,r\mathrm{cap}(\partial'')\to 0,\quad\text{as }r\to\infty,
\]
and it suffices to prove that 
\[
\lim\limits_{r\to\infty}\frac{\delta_r}{r}\big(\mathds{1}_{d=3}\tfrac{1}{\log r} + \mathds{1}_{d\geq 4}(d-3)\big)
\int_0^r\mu_r^s\big(x\in\ell_{re_1}^{s\delta_r}(\rho) : x_1>y\big)dy = \frac{\lambda_{\mathrm{BI}}\rho}{\alpha}.
\]
Note that by the symmetry of $\mu_r$, 
\[
\int_0^r\mu_r\big(x\in\ell_{re_1}(\rho)\,:\,x_1>y\big)dy
=
\int_0^r\mu_r\big(x\in\ell_{re_1}(\rho)\,:\,x_1<y\big)dy 
=\frac{r}{2}\mathrm{cap}\big(\ell_{re_1}(\rho)\big),
\]
and by Lemma~\ref{l:capacity-cylinder} and the definition of $\delta_r$, 
\[
\lim\limits_{r\to\infty}\frac{\delta_r}{r}\big(\mathds{1}_{d=3}\tfrac{1}{\log r} + \mathds{1}_{d\geq 4}(d-3)\big)\,
\frac{r}{2}\mathrm{cap}\big(\ell_{re_1}(\rho)\big) 
= 
\frac12\varkappa_d\rho^{d-3}\big(\mathds{1}_{d=3} + \mathds{1}_{d\geq 4}(d-3)\big)
= 
\frac{\lambda_{\mathrm{BI}}\rho}{\alpha}.
\]
Hence, to complete the proof, if suffices to show that 
\begin{equation}\label{eq:BI:capacities-difference-2}
\lim\limits_{r\to\infty}\frac{\delta_r}{r}
\int_0^r\Big(\mu_r^s\big(x\in\ell_{re_1}^{s\delta_r}(\rho) : x_1>y\big) - \mu_r\big(x\in\ell_{re_1}(\rho) : x_1>y\big)\Big)dy = 0.
\end{equation}
We have 
\begin{multline*}
\int_0^r\Big(\mu_r^s\big(x\in\ell_{re_1}^{s\delta_r}(\rho) : x_1>y\big) - \mu_r\big(x\in\ell_{re_1}(\rho) : x_1>y\big)\Big)dy\\
\leq 2\rho\,\mathrm{cap}\big(\ell_{re_1}^{s\delta_r}(\rho)\big) + 
\int_0^{r-2\rho}\Big(\mu_r^s\big(x\in\ell_{re_1}^{s\delta_r}(\rho) : x_1>y+2\rho\big) - \mu_r\big(x\in\ell_{re_1}(\rho) : x_1>y\big)\Big)dy.
\end{multline*}
By the definition \eqref{def:equilibrium-measure-and-capacity-2} of the equilibrium measure,
\begin{multline*}
\mu_r^s\big(x\in\ell_{re_1}^{s\delta_r}(\rho) : x_1>y+2\rho\big) - \mu_r\big(x\in\ell_{re_1}(\rho) : x_1>y\big)\\
\leq 
\int_{\partial \ell_{re_1}^{s\delta_r}(\rho)}\mathds{1}_{x_1>y+2\rho}\,
\mathsf P_x\big[H_{\ell_{re_1}(\rho)}<\infty, (W_{H_{\ell_{re_1}(\rho)}})_1\leq y\big]\,\mu_r^s(dx).
\end{multline*}
Note that for every $x\in\partial \ell_{re_1}^{s\delta_r}(\rho)$ with $x_1>y+2\rho$, there exists a ball $B_x\subset \ell_{re_1}(\rho)$ of radius $\rho$ at distance at most $s\delta_r$ from $x$, such that if a Brownian motion started from $x$ hits $\ell_{re_1}(\rho)$ for the first time at a point with $x_1$-coordinate $\leq y$, then the Brownian motion must exit from the $\rho$-neighborhood of $B_x$ before hitting $B_x$. Thus, by \eqref{eq:BM-hitting}, the probability under the integral is bounded from above by $Cs\delta_r$ for some $C=C(d,\rho)$. 
Hence 
\[
\int_0^r\Big(\mu_r^s\big(x\in\ell_{re_1}^{s\delta_r}(\rho) : x_1>y\big) - \mu_r\big(x\in\ell_{re_1}(\rho) : x_1>y\big)\Big)dy
\leq \big(2\rho + Cs\delta_r r\big)\,\mathrm{cap}\big(\ell_{re_1}^{s\delta_r}(\rho)\big).
\]
Similarly, 
\begin{multline*}
\int_0^r\Big(\mu_r^s\big(x\in\ell_{re_1}^{s\delta_r}(\rho) : x_1>y\big) - \mu_r\big(x\in\ell_{re_1}(\rho) : x_1>y\big)\Big)dy\\
\geq -2\rho\,\mathrm{cap}\big(\ell_{re_1}(\rho)\big) + 
\int_0^{r-2\rho}\Big(\mu_r^s\big(x\in\ell_{re_1}^{s\delta_r}(\rho) : x_1>y\big) - \mu_r\big(x\in\ell_{re_1}(\rho) : x_1>y+2\rho\big)\Big)dy,
\end{multline*}
where, by \eqref{def:equilibrium-measure-and-capacity-2} and \eqref{eq:BM-hitting},
\begin{multline*}
\mu_r^s\big(x\in\ell_{re_1}^{s\delta_r}(\rho) : x_1>y\big) - \mu_r\big(x\in\ell_{re_1}(\rho) : x_1>y+2\rho\big)\\
\geq 
-\int_{\partial \ell_{re_1}^{s\delta_r}(\rho)}\mathds{1}_{x_1\leq y}\,
\mathsf P_x\big[H_{\ell_{re_1}(\rho)}<\infty, (W_{H_{\ell_{re_1}(\rho)}})_1> y+2\rho\big]\,\mu_r^s(dx)
\geq 
-Cs\delta_r\,\mathrm{cap}\big(\ell_{re_1}^{s\delta_r}(\rho)\big).
\end{multline*}
Thus, 
\[
\Big|\int_0^r\Big(\mu_r^s\big(x\in\ell_{re_1}^{s\delta_r}(\rho) : x_1>y\big) - \mu_r\big(x\in\ell_{re_1}(\rho) : x_1>y\big)\Big)dy\Big|
\leq \big(2\rho + Cs\delta_r r\big)\,\mathrm{cap}\big(\ell_{re_1}^{s\delta_r}(\rho)\big),
\]
and \eqref{eq:BI:capacities-difference-2} follows from Lemma~\ref{l:capacity-cylinder} and the definition of $\delta_r$.
The proof is completed. 
\qed

\section{Proof of Theorem~\ref{thm:main-result} for the Poisson-Boolean model}\label{sec:BM}

Fix $\alpha>0$ and a probability distribution $\mathsf Q$ on $\R_+$ and denote by $\mathsf P_{\alpha,\mathsf Q}$ the law of the Poisson-Boolean model with intensity $\alpha$ and radii distribution $\mathsf Q$. Recall that $\delta_r = \frac{1}{r}$ for all $d\geq 2$. 
We aim to prove that for every $s>0$, 
\[
\lim\limits_{r\to\infty}\mathsf P_{\alpha,\mathsf Q}\big[Q_{re_1}>s\delta_r\,|\,re_1\text{ is visible from }0\big] = \exp\big(-\lambda_{\mathrm{BM}}s\big)
\]
with
\begin{equation}\label{eq:BM-lambda}
\lambda_{\mathrm{BM}} = \frac12\alpha(d-1)\kappa_{d-1}\mathsf E[\varrho^{d-2}],
\end{equation}
where $\varrho$ is a random variable with law $\mathsf Q$ (recall that $\mathsf E[\varrho^d]<\infty$).

\smallskip

By the definition \eqref{def:Qx} of $Q_{re_1}$, \eqref{def:BM} and \eqref{def:cones}, 
\begin{multline*}
\mathsf P_{\alpha,\mathsf Q}\big[Q_{re_1}>s\delta_r\,|\,re_1\text{ is visible from }0\big]
= 
\frac{\mathsf P_{\alpha,\mathsf Q}[\text{every $x\in B(re_1,s\delta_r)$ is visible from $0$}]}{\mathsf P_{\alpha,\mathsf Q}[\text{$re_1$ is visible from $0$}]}\\
=
\frac{\mathsf P[\mathcal B^\alpha_\mathsf Q\cap \ell_{re_1}^{s\delta_r}=\emptyset]}{\mathsf P[\mathcal B^\alpha_\mathsf Q\cap \ell_{re_1}=\emptyset]}
=
\exp\big(-\alpha\big[\mathsf E\big[\lambda_d\big(\ell_{re_1}^{s\delta_r}(\varrho)\big)\big] - \mathsf E\big[\lambda_d\big(\ell_{re_1}(\varrho)\big)\big]\big]\big).
\end{multline*}
Thus, it suffices to prove that 
\begin{equation}\label{eq:BM-proof-1}
\lim\limits_{r\to\infty}\big[\mathsf E\big[\lambda_d\big(\ell_{re_1}^{s\delta_r}(\varrho)\big)\big] - \mathsf E\big[\lambda_d\big(\ell_{re_1}(\varrho)\big)\big]\big] = \frac{\lambda_{\mathrm{BM}}}{\alpha}s.
\end{equation}
Note that for every $t>0$, 
\begin{equation}\label{eq:BM-volume-1}
\lambda_d\big(\ell_{re_1}(t)\big) = \kappa_dt^d + \kappa_{d-1}t^{d-1}r.
\end{equation}
Furthermore, if $\beta_r$ is the opening angle of  $\ell_{re_1}^{s\delta_r}(\rho)$, then $\sin\beta_r = \frac{s\delta_r}{r}=\frac{s}{r^2}$ and
\begin{eqnarray}\label{eq:BM-volume-2}
\lambda_d\big(\ell_{re_1}^{s\delta_r}(t)\big) 
&= 
&\kappa_dt^d + \kappa_{d-1}\int_0^r\Big(t+y\sin\beta_r\Big)^{d-1}dy + R_1\\ \nonumber
&= &\kappa_dt^d + \kappa_{d-1}t^{d-1}r + \kappa_{d-1}(d-1)t^{d-2}\frac{r^2}{2}\sin\beta_r + R_2\\ \nonumber
&= &\kappa_dt^d + \kappa_{d-1}t^{d-1}r + \frac12\kappa_{d-1}(d-1)t^{d-2}s + R_2,
\end{eqnarray}
where $|R_1|, |R_2|\leq C\max(t^{d-1},1)\delta_r$ for some $C=C(d,s)$, from which \eqref{eq:BM-proof-1} follows. \qed

\section{Proof of Theorem~\ref{thm:main-result} for the Poisson cylinders}\label{sec:PC}

Fix positive $\alpha$ and $\rho$ and denote by $\mathsf P_{\alpha,\rho}$ the law of the Poisson cylinders at level $\alpha$ with radius $\rho$. 
Recall that $\delta_r = 1$ for $d=2$ and $\delta_r = \frac{1}{r}$ for $d\geq 3$. 
We aim to prove that for every $s>0$, 
\[
\lim\limits_{r\to\infty}\mathsf P_{\alpha,\rho}\big[Q_{re_1}>s\delta_r\,|\,re_1\text{ is visible from }0\big] = \exp\big(-\lambda_{\mathrm{PC}}s\big)
\]
with
\begin{equation}\label{eq:PC-lambda}
\lambda_{\mathrm{PC}}= 
\left\{\begin{array}{ll}\alpha & d=2\\[4pt]
\frac12 \alpha(d-2)\kappa_{d-2}\rho^{d-3}\mathsf E[\|\xi\|]
& d\geq 3\end{array}\right.
\end{equation}
where $\xi$ is the orthogonal projection of a uniformly distributed point on the unit sphere in $\R^d$ onto the hyperplane $\{x=(x_1,\ldots, x_d)\in\R^d\,:\,x_1=0\}$ and 
$\mathsf E[\|\xi\|]
= \frac{\mathrm{Beta}(\frac12,\frac d2)}{\mathrm{Beta}(\frac12,\frac{d-1}2)}$.

\medskip

By the definition \eqref{def:Qx} of $Q_r$, \eqref{def:PC-PL}, \eqref{def:PC-alpha-rho} and \eqref{def:cones}, 
\begin{multline*}
\mathsf P_{\alpha,\rho}\big[Q_{re_1}>s\delta_r\,|\,re_1\text{ is visible from }0\big]
= 
\frac{\mathsf P_{\alpha,\rho}[\text{every $x\in B(re_1,s\delta_r)$ is visible from $0$}]}{\mathsf P_{\alpha,\rho}[\text{$re_1$ is visible from $0$}]}\\
=
\frac{\mathsf P[\mathcal L^\alpha_\rho\cap \ell_{re_1}^{s\delta_r}=\emptyset]}{\mathsf P[\mathcal L^\alpha_\rho\cap \ell_{re_1}=\emptyset]}
=
\frac{\mathsf P[\mathcal L^\alpha\cap \ell_{re_1}^{s\delta_r}(\rho)=\emptyset]}{\mathsf P[\mathcal L^\alpha\cap \ell_{re_1}(\rho)=\emptyset]}
=
\exp\big(-\alpha\big[\mu\big(\ell_{re_1}^{s\delta_r}(\rho)\big) - \mu\big(\ell_{re_1}(\rho)\big)\big]\big).
\end{multline*}
Thus, it suffices to prove that 
\begin{equation}\label{eq:PC-proof-1}
\lim\limits_{r\to\infty}\big[\mu\big(\ell_{re_1}^{s\delta_r}(\rho)\big) - \mu\big(\ell_{re_1}(\rho)\big)\big] = \frac{\lambda_{\mathrm{PC}}}{\alpha}s.
\end{equation}
By the definition \eqref{def:PC-mu} of $\mu$, 
\[
\mu\big(\ell_{re_1}^{s\delta_r}(\rho)\big) - \mu\big(\ell_{re_1}(\rho)\big)
= 
\int_{SO_d}\big[\lambda_{d-1}\big(\pi(\ell_{r\phi(e_1)}^{s\delta_r}(\rho))\big) - \lambda_{d-1}\big(\pi(\ell_{r\phi(e_1)}(\rho))\big)\big]\,\nu(d\phi).
\]
If $d=2$, then $\lambda_{d-1}\big(\pi(\ell_{r\phi(e_1)}^{s\delta_r}(\rho))\big) - \lambda_{d-1}\big(\pi(\ell_{r\phi(e_1)}(\rho))\big) = s\delta_r = s$, so \eqref{eq:PC-proof-1} holds. 

\smallskip

Let $d\geq 3$. Note that $\pi(\ell_{r\phi(e_1)}^{s\delta_r}(\rho)) = \ell^{s\delta_r}_{r\pi(\phi(e_1))}(\rho)\cap H$ and $\pi(\ell_{r\phi(e_1)}(\rho)) = \ell_{r\pi(\phi(e_1))}(\rho)\cap H$, where $H$ be the hyperplane $\{x=(x_1,\ldots, x_d)\in\R^d\,:\,x_1=0\}$. 
Furthermore, if $\phi$ has law $\nu$, then $\phi(e_1)$ is uniformly distributed on the unit sphere $S$ in $\R^d$ centered at $0$. 
Thus, if $\xi$ is the orthogonal projection on $H$ of a uniformly distributed point on $S$, then 
\[
\mu\big(\ell_{re_1}^{s\delta_r}(\rho)\big) - \mu\big(\ell_{re_1}(\rho)\big)
= \mathsf E\big[
\lambda_{d-1}\big(\ell_{r\xi}^{s\delta_r}(\rho)\cap H\big) - \lambda_{d-1}\big(\ell_{r\xi}(\rho)\cap H\big)
\big].
\]
Similarly to the volume formulas \eqref{eq:BM-volume-1} and \eqref{eq:BM-volume-2}, we obtain
\[
\lambda_{d-1}\big(\ell_{r\xi}(\rho)\cap H\big) =  \kappa_{d-1} \rho^{d-1} + \kappa_{d-2}\rho^{d-2}r\|\xi\|
\]
and 
\[
\lambda_{d-1}\big(\ell_{r\xi}^{s\delta_r}(\rho)\cap H\big) = 
\kappa_{d-1}\rho^{d-1} + \kappa_{d-2}\rho^{d-2}r\|\xi\| + \frac12\kappa_{d-2}(d-2)\rho^{d-3}s\|\xi\| + R,
\]
where $|R|\leq C\delta_r$ for some $C=C(d,s,\rho)$. Thus, 
\[
\lim\limits_{r\to\infty}\big[\mu\big(\ell_{re_1}^{s\delta_r}(\rho)\big) - \mu\big(\ell_{re_1}(\rho)\big)\big]
=
\frac12\kappa_{d-2}(d-2)\rho^{d-3}s\,\mathsf E\big[\|\xi\|\big],
\]
which proves \eqref{eq:PC-proof-1}. 
Finally, we compute $\mathsf E[\|\xi\|]$. 
Using spherical coordinates $\varphi_1,\ldots, \varphi_{d-1}$, where $\varphi_1$ is the angle with $e_1$, $\varphi_1,\ldots,\varphi_{d-2}\in[0,\pi]$ and $\varphi_{d-1}\in[0,2\pi]$, we obtain that 
\begin{eqnarray*}
\mathsf E\big[\|\xi\|\big] &= 
&\frac{\int \big(\sin\varphi_1\big)\,\big(\sin^{d-2}\varphi_1\,\ldots\,\sin\varphi_{d-2}\big)d\varphi_1\ldots d\varphi_{d-1}}{\int \big(\sin^{d-2}\varphi_1\,\ldots\,\sin\varphi_{d-2}\big)d\varphi_1\ldots d\varphi_{d-1}}\\
&= &\frac{\int_0^\pi \sin^{d-1}\varphi_1\,d\varphi_1}{\int_0^\pi \sin^{d-2}\varphi_1\,d\varphi_1} 
= \frac{\mathrm{Beta}(\frac12,\frac d2)}{\mathrm{Beta}(\frac12,\frac{d-1}2)},
\end{eqnarray*}
see e.g.\ \cite[6.2.1]{AS:math-functions}. 
The proof is completed.
\qed

\section*{Acknowledgements}
We thank Jean-Baptiste Gou\'er\'e for suggesting us to investigate the conditional distribution of $Q_x/\delta_{\|x\|}$. 
The research of both authors has been supported by the DFG Priority Program 2265 ``Random Geometric Systems'' (Project number 443849139).

\end{document}